\newcommand{\ubox}{\overline{\dim_{\mathrm{B}}}}
\newcommand{\lbox}{\underline{\dim_{\mathrm{B}}}}
\newcommand{\nbox}{\dim_{\mathrm{B}}}
\newcommand{\Haus}{\dim_{\mathrm{H}}}
\newtheorem*{thm*}{Theorem}
\newtheorem{thm}{Theorem}[section]
\newtheorem{lma}[thm]{Lemma}
\newtheorem{defn}[thm]{Definition}
\newtheorem{conj}[thm]{Conjecture}
\newtheorem{rem}[thm]{Remark}
\newtheorem{exm}[thm]{Example}
\begin{document}
	
	\title[Bernoulli decomposition]{Bernoulli decomposition and arithmetical independence between sequences}
	
	\author{Han Yu}
	\address{Han Yu\\
		Department of Pure Mathematics and Mathematical Statistics\\University of Cambridge\\CB3 0WB \\ UK }
	\curraddr{}
	\email{hy351@maths.cam.ac.uk}
	\thanks{}
	
	\subjclass[2010]{28D20;11J71;28A80}
	
	\keywords{Independence of sequences, Bernoulli decomposition, disjointness between dynamical systems}
	
	\date{}
	
	\dedicatory{}
	
	\begin{abstract}
		In this paper, we study the following set\[A=\{p(n)+2^nd \mod 1: n\geq 1\}\subset [0,1],\]
		where $p$ is a polynomial with at least one irrational coefficient on non constant terms, $d$ is any real number and for $a\in [0,\infty)$, $a \mod 1$ is the fractional part of $a$. With the help of a method recently introduced by Wu, we show that the closure of $A$ must have full Hausdorff dimension.
	\end{abstract}
	
	\maketitle
	\allowdisplaybreaks
	\section{Introduction and background}
	In this paper, we follow a Bernoulli decomposition method developed in \cite{Wu}. This method combines Sinai's factor theorem with some properties of Bernoulli shifts and solves a dimension version of Furstenberg's intersection problem. Here, we will consider a very different number-theoretic problem with a similar method. Let $\alpha$ be an irrational number and we know that the sequence (irrational rotation orbit) $\{n\alpha\mod 1\}_{n\geq 1}$ equidistributes in $[0,1].$ Let $X_n,n\geq 1$ be a sequence of $i.i.d$ real-valued random variables. For convenience, let $X_1$ be uniformly distributed in $[0,1].$ In this setting, one can show that $\{n\alpha+X_n\mod 1\}_{n\geq 1}$ equidistributes almost surely and in particular its closure contains intervals. Now, we replace the random sequence $X_n$ with a deterministic sequence $\{2^n d\mod 1\}_{n\geq 1}$ by choosing an arbitrary real number $d.$ On one hand, if $d$ is `simple' enough, say, a rational number, then it is straightforward that $\overline{\{2^n d+n\alpha\mod 1\}_{n\geq 1}}$ contains intervals. On the other hand, if $d$ is `random' enough, say, chosen randomly according to the Lebesgue measure, then by simple probabilistic arguments one can show that almost surely, $\{2^n d+n\alpha\mod 1\}_{n\geq 1}$ again equidistributes and its closure contains intervals. This consideration leads us to the following conjecture.
	\begin{conj}\label{conj}
		Let $\alpha$ be an irrational number and $d$ be a real number. Then the topological closure of the sequence $\{2^n d+n\alpha\mod 1\}_{n\geq 1}$ contains intervals.
	\end{conj}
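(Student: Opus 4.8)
The plan is to recast the problem dynamically on the two-torus and reduce ``the closure contains an interval'' to a statement about joinings. Assume without loss of generality that $d\in[0,1)$, since the integer part of $d$ contributes an integer to $2^nd$ and disappears modulo $1$. Let $T_2\colon\mathbb{T}\to\mathbb{T}$, $T_2(x)=2x\bmod 1$, be the doubling map, let $R_\alpha(\theta)=\theta+\alpha\bmod 1$, and set $F=T_2\times R_\alpha$ on $\mathbb{T}^2$. With the continuous observable $\Phi(x,\theta)=x+\theta\bmod 1$ one has $2^nd+n\alpha=\Phi\bigl(F^n(d,0)\bigr)\bmod 1$, so that
\[\overline{\{2^nd+n\alpha\bmod 1\}_{n\geq 1}}\ \supseteq\ \Phi\bigl(\overline{O(d,0)}\bigr),\]
where $O(d,0)$ is the $F$-orbit of $(d,0)$. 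Thus it suffices to produce an $F$-invariant measure $\nu$ supported in $\overline{O(d,0)}$ --- for instance a weak-$*$ limit of the empirical measures $\frac1N\sum_{n\le N}\delta_{F^n(d,0)}$ --- whose push-forward $\Phi_*\nu$ has an interval in its support. The computation driving everything is elementary: since $R_\alpha$ is uniquely ergodic, the second marginal of every $F$-invariant measure is automatically Lebesgue; and if $\nu$ is moreover the independent joining $\mu_1\times\mathrm{Leb}$, then convolving with the uniform measure gives $\Phi_*\nu=\mathrm{Leb}$, so the closure is all of $[0,1]$. The only question is whether the joining is the independent one.

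The first, generic, case is when some empirical limit $\mu_1$ of the doubling orbit $\{2^nd\}$ has positive entropy. Since $T_2$ is then a $K$-system on $(\mathbb{T},\mu_1)$ while $R_\alpha$ has zero entropy, Furstenberg's disjointness theorem (a $K$-system is disjoint from any zero-entropy system) forces the only joining with $(\mathbb{T},R_\alpha,\mathrm{Leb})$ to be $\mu_1\times\mathrm{Leb}$. By the computation above $\Phi_*\nu=\mathrm{Leb}$, and the closure equals $[0,1]$. The low-complexity values of $d$ must be handled separately but fall to the same scheme: for rational $d=p/q$ the doubling orbit is eventually periodic and, by unique ergodicity of the skew product $(j,\theta)\mapsto(j+1,\theta+\alpha)$ on $(\mathbb{Z}/q')\times\mathbb{T}$ (which holds because $q'\alpha$ is irrational), the orbit of $(d,0)$ equidistributes for the product measure; again $\Phi_*\nu=\mathrm{Leb}$.

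It remains to treat $d$ of zero entropy whose doubling orbit is ``rotation-like.'' Such $d$ genuinely exist: pushing a Sturmian subshift through binary coding yields a $T_2$-invariant measure measurably isomorphic to an irrational rotation $R_{\alpha'}$. When $\alpha'$ is arithmetically resonant with $\alpha$ --- the precise condition being that $R_{\alpha'}$ and $R_\alpha$ share a nontrivial common factor, i.e.\ $m\alpha'-k\alpha\in\mathbb{Z}$ for some $(k,m)\neq(0,0)$ --- the two rotations admit non-product joinings, and a single empirical limit of the orbit of $(d,0)$ may be a graph joining $(\mathrm{id}\times c)_*\mu_1$. Its push-forward $\Phi_*\nu$ is the image of $\mu_1$ under $x\mapsto x+c(x)$, which can be a measure-zero Cantor set carrying no interval; the one-measure argument therefore breaks down, and this resonant, zero-entropy case is the main obstacle.

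I expect this case to require genuinely new input rather than soft ergodic theory, and I would attack it from two directions. First, one should use the whole orbit closure instead of one invariant measure: the joinings of two resonant rotations form a simplex of graph joinings $\Delta_\beta$, and the point is to show that the orbit of $(d,0)$ does not lock onto a single $\beta$ but visits a range of them, so that the union $\bigcup_\beta \mathrm{supp}\,\Phi_*\Delta_\beta$ sweeps out an interval. Second, and more promisingly, one should exploit that $T_2$ is \emph{expanding} and that the coding map $x\mapsto x$ admits no continuous section intertwining $T_2$ with a rotation; quantitatively, a Diophantine/transversality estimate between the multiplicative scale of the doubling map and the additive scale of the rotation should prevent the graph $x\mapsto c(x)$ from being annihilated by the shearing $x\mapsto x+c(x)$, forcing the image to spread across an interval. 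Controlling this interaction --- in effect, ruling out a perfect conspiracy between the binary expansion of $d$ and the continued-fraction expansion of $\alpha$ --- is where the real work lies, and is precisely what separates the full conjecture from the Hausdorff-dimension statement established in this paper.
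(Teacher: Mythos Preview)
The statement you are attacking is Conjecture~\ref{conj}, which the paper explicitly leaves \emph{open}; there is no proof in the paper to compare against. The paper establishes only the weaker Hausdorff-dimension statement (Theorem~\ref{main}). Your own closing paragraphs acknowledge this: the zero-entropy, rotation-resonant case is identified as the genuine obstacle and left unresolved, so what you have written is a programme rather than a proof.

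Beyond that acknowledged gap, your ``generic'' positive-entropy case is also incomplete. The assertion that $(\mathbb{T},T_2,\mu_1)$ is a $K$-system whenever $\mu_1$ has positive entropy is false: by Krieger's generator theorem one can realise, as a shift-invariant measure on $\{0,1\}^{\mathbb{N}}$ (equivalently a $T_2$-invariant measure on $\mathbb{T}$), any ergodic system of entropy less than $\log 2$, in particular a product of a Bernoulli shift with an irrational rotation $R_\beta$. Such a measure has positive entropy but a nontrivial Kronecker factor, so it is not $K$, and if $\beta$ is rationally related to $\alpha$ it admits non-product joinings with $R_\alpha$. What positive entropy does buy you, via Sinai's factor theorem, is a Bernoulli \emph{factor} disjoint from $R_\alpha$; but that only forces the joining $\nu$ to be relatively independent over that factor, not to equal $\mu_1\times\mathrm{Leb}$. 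This is exactly the subtlety the paper negotiates in Theorem~\ref{WU}: one disintegrates over the Bernoulli factor and controls the fibre measures, and the outcome is strong enough for Hausdorff dimension $1$ but not for ``contains an interval''. In short, both the zero-entropy resonant case and the positive-entropy non-$K$ case remain open, and your proposal does not close either of them.
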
  
	In this paper, we prove the following partial result towards the above conjecture.
	\begin{thm}
		Let $\alpha$ be an irrational number and $d$ be a real number. Then the topological closure of the sequence $\{2^n d+n\alpha\mod 1\}_{n\geq 1}$ has Hausdorff dimension $1$.
	\end{thm}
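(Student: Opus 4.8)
The plan is to recast the problem dynamically and reduce it to a disjointness statement between the doubling map and the rotation. Write $T_2\colon\mathbb{T}\to\mathbb{T}$, $T_2(x)=2x\bmod 1$, and $R_\alpha(x)=x+\alpha\bmod 1$, and note that $2^nd\bmod 1=T_2^n d$ and $n\alpha\bmod 1=R_\alpha^n 0$, so the sequence of interest is $a_n=\Phi\big(T_2^n d,\,R_\alpha^n 0\big)$, where $\Phi\colon\mathbb{T}^2\to\mathbb{T}$ is the addition map $\Phi(x,y)=x+y\bmod 1$. Consider the empirical measures $\tfrac1N\sum_{n=1}^N\delta_{(T_2^n d,\,n\alpha)}$ on $\mathbb{T}^2$ and let $\mu$ be any weak-$*$ subsequential limit. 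Then $\mu$ is invariant under $T_2\times R_\alpha$; its first marginal $\mu_1$ is $T_2$-invariant, and its second marginal is forced to be Lebesgue because $R_\alpha$ is uniquely ergodic. Thus $\mu$ is a joining of the system $(\mathbb{T},T_2,\mu_1)$ with the rotation $(\mathbb{T},R_\alpha,\mathrm{Leb})$. Since $\Phi_*\mu$ is supported on $\overline A$, we have $\Haus\overline A\ge\dim\Phi_*\mu$, and it suffices to produce, for every $\varepsilon>0$, some such joining with $\dim\Phi_*\mu>1-\varepsilon$.

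The engine of the argument is the following observation: if the joining is the independent product $\mu=\mu_1\times\mathrm{Leb}$, then \[\Phi_*(\mu_1\times\mathrm{Leb})=\mu_1*\mathrm{Leb}=\mathrm{Leb},\] because convolving any measure with Lebesgue on $\mathbb{T}$ returns Lebesgue. Remarkably this gives $\dim\Phi_*\mu=1$ regardless of the size or entropy of $\mu_1$: the rotation alone supplies the full dimension once it is independent of the doubling orbit. Hence the whole problem is to force the joining to be essentially a product, i.e.\ to prove that the doubling dynamics driving $d$ is disjoint from $R_\alpha$. This is exactly where Wu's Bernoulli decomposition and Sinai's factor theorem should enter: one splits $(\mathbb{T},T_2,\mu_1)$ into its Kronecker (pure point, zero entropy) part and the complementary part, applies Sinai's theorem to extract a Bernoulli factor from any positive-entropy component, and invokes Furstenberg's theorem that Bernoulli --- indeed any weakly mixing --- systems are disjoint from the pure point rotation $R_\alpha$. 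On this part the joining is forced to be independent and contributes full dimension.

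The main obstacle is the resonant case, in which $\mu_1$ carries a rotational factor whose frequency is commensurable with $\alpha$, so that disjointness genuinely fails and $\mu$ concentrates on a graph-type coupling over a common rotation base; then $\Phi_*\mu$ need not be Lebesgue. This case is real: taking the binary digits of $d$ to be a Sturmian sequence coding the rotation by $\alpha$ gives $2^nd=F(n\alpha)$ for an explicit bounded-variation function $F$, so that $a_n=(F+\mathrm{id})(n\alpha)$. The point that rescues full dimension is that $F$ is a purely singular (pure-jump) function: its range is a zero-dimensional Cantor set and $F'=0$ almost everywhere, so $(F+\mathrm{id})'=1$ almost everywhere and the range of $a_n$ still has full Lebesgue measure. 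I expect the general resonant case to be handled by the same principle made quantitative: the rotation provides the full-dimensional base through the identity term coming from $n\alpha$, while the doubling contribution is measure-theoretically singular on each fibre of the common Kronecker factor, so the coupling $\mathrm{id}+(\text{doubling cocycle})$ preserves dimension close to $1$. Turning this into a lower bound on $\dim\Phi_*\mu$ for arbitrary resonant joinings --- controlling the dimension of a measure that lives on a graph over a rotation while only its projections are understood --- is the hard part, and is presumably where the arithmetic independence between $\{2^nd\}$ and $\{n\alpha\}$ must be quantified rather than used as soft disjointness.

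Finally, since we only need Hausdorff dimension and not positive Lebesgue measure (the latter being the content of Conjecture~\ref{conj}), the construction can be carried out with room to spare: we are free to pass to subsequences of times and to sub-measures of $\mu$, losing at most $\varepsilon$ in dimension at each step, and then to assemble, for every $\varepsilon>0$, a self-similar Cantor subset of $\overline A$ (or a mass distribution on it) of dimension exceeding $1-\varepsilon$ via the mass distribution principle. Letting $\varepsilon\to 0$ yields $\Haus\overline A=1$.
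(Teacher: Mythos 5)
Your opening reduction is exactly the paper's: recast the sequence as $\Sigma$ applied to an orbit of $R_\alpha\times T_2$, pass to an invariant (ergodic) measure, and observe that unique ergodicity of the rotation forces one marginal to be Lebesgue. But the engine you then propose --- force the joining to be (essentially) a product, so that $\Phi_*\mu=\mu_1*\mathrm{Leb}=\mathrm{Leb}$ --- cannot be carried through, and you concede this yourself: your ``resonant case'' paragraph is an admission that when the doubling marginal $\mu_1$ has zero entropy (e.g.\ of Sturmian type, where the $T_2$-system is measure-isomorphic to a rotation), disjointness from $R_\alpha$ genuinely fails and you have no lower bound on $\dim\Phi_*\mu$ for graph-type couplings. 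This is not a technical loose end; it is the entire content of the theorem. The heuristic you offer for the Sturmian example ($F'=0$ a.e., hence $(\mathrm{id}+F)'=1$ a.e.) does not generalize: an arbitrary ergodic joining concentrated on a graph over a common Kronecker factor carries no differentiability, monotonicity, or self-similar structure, and ``singular on each fibre'' does not by itself prevent the image under $\Sigma$ from having small dimension. Your closing paragraph (build a Cantor set of dimension $>1-\varepsilon$ via the mass distribution principle) names a desideratum, not an argument.

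The paper never confronts the resonant case and never needs $\Sigma_*\rho$ to be large; the missing idea in your proposal is a squeeze that sidesteps it. Having fixed an ergodic joining $\rho$ supported on $Z=\overline{\{S^n(0,d)\}}$, the paper conditions on the \emph{maximal-entropy} Bernoulli factor $\mathcal{B}$ of the doubling marginal $(\mathbb{T},T_2,\nu)$, supplied by the Ornstein--Weiss unilateral Sinai theorem. Two facts then cooperate (Theorem \ref{WU}): since $h(T_2|\mathcal{B},\nu)=0$, the conditional Shannon--McMillan--Breiman theorem plus a counting argument (Lemma \ref{CSMB}) yields, for each $\epsilon>0$, a set $B_\epsilon$ with $\ubox B_\epsilon\le\epsilon$ charged by the conditional measures $\rho^{\mathcal{C}}_{(x,y)}$; and disjointness (distal versus Bernoulli, Example \ref{EXM}) is invoked only for the \emph{factor} joining on $\sigma(\mathcal{X}\times\mathcal{B})$, which forces those conditional measures to retain Lebesgue as first marginal. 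Taking $A=\pi_2^{-1}(B_\epsilon)\cap Z$, the proof concludes with the elementary inequality
\[
1=\Haus \pi_1(A)\le \Haus \Sigma(A)+\ubox \pi_2(A)\le \Haus\Sigma(Z)+\epsilon,
\]
which follows from $\pi_1(A)\subset \Sigma(A)-\pi_2(A)$ and the product bound of Mattila. Note in particular that the zero-entropy situation that blocks you is handled trivially on this route: if $h(T_2,\nu)=0$, unconditional SMB already gives positive-$\nu$-measure sets of upper box dimension $\le\epsilon$, and no disjointness is needed at all, since $\pi_1\rho=\mathrm{Leb}$ automatically. The conceptual shift you are missing is to trade ``the sum measure is Lebesgue'' for ``a positive-measure piece of $Z$ projects onto a full-measure set in the rotation coordinate while occupying an $\epsilon$-dimensional set in the doubling coordinate,'' converting the dimension of a sumset into a covering-number count and avoiding any analysis of resonant couplings.
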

	In fact, we will prove a stronger result, Theorem \ref{main}. Before we state this theorem, we provide some more backgrounds. Given two sequences $x=\{x_n\}_{n\geq 1}, y=\{y_n\}_{n\geq 1}$ in $[0,1],$ it is often interesting to study their independence. In terms of sequences with dynamical backgrounds, this can be also understood as the disjointness between dynamical systems, see \cite{F67} for more details. Intuitively, we want to say that two sequences $x,y$ are independent if $\{(x_n,y_n)\}_{n\geq 1}$ is in some sense close to the product set $X\times Y,$ where $X,Y$ are the sets of numbers in the sequence $x,y$ respectively. We give a natural way of expressing this idea.
	\begin{defn}
		Let $x=\{x_n\}_{n\geq 1}, y=\{y_n\}_{n\geq 1}$ be two sequences in $[0,1].$ We write $X,Y$ to be the sets of numbers in the sequence $x,y$ respectively. Then we say that $x$ and $y$ are arithmetically independent if the set $H(x,y)$ of numbers in the sequence $\{x_n+y_n\}_{n\geq 1}$ attains the largest possible box dimension, namely,
		\[
		\lbox H(x,y)=\min\{1, \lbox X+\lbox Y\}.
		\]
	\end{defn}
	As an easy example, we see that $\{n\alpha\}_{n\geq 1}$ and $\{n\beta\}_{n\geq 1}$ are arithmetically independent if $1,\alpha,\beta$ are linearly independent over the field $\mathbb{Q}.$ It is also possible to study the independence between $\{n\alpha\}_{n\geq 1}$ and $\{n^2\beta\}_{n\geq 1}$ based on Weyl's equidistribution theorem. Naturally, a next question is to ask about the independence between $\{n\alpha\}_{n\geq 1}$ and $\{2^n d\}_{n\geq 1}$, where $d$ is any real number.  For a polynomial $p$ with degree $k$ with real coefficients, we write $p(n)=\sum_{i=0}^k a_i n^i.$ We say that $p$ is irrational if at least one of the numbers $a_1,\dots,a_k$ is an irrational number. In this paper, we show the following result. See Section \ref{Ab} for a clarification of the notations that appear below.
	
	\begin{thm}\label{main}
		Let $p$ be an irrational polynomial and let $d$ be any real number. Then the sequences $\{p(n) \mod 1\}_{n\geq 1}$ and $\{2^n d\mod 1\}_{n\geq 1}$ are arithmetically independent. In fact, we have the following stronger result \[\Haus \overline{\{p(n)+2^n d \mod 1\}_{n\geq 1}}=1.\]
	\end{thm}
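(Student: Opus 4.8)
The plan is to recast both sequences as orbits in measure-preserving systems and to reduce the theorem to a decorrelation (disjointness) statement. Normalising $d\in[0,1)$ and writing $T\colon x\mapsto 2x\bmod 1$ for the doubling map, we have $2^{n}d\bmod 1=T^{n}d$, and if $d=0.d_1d_2\cdots$ in binary then the orbit $\{T^{n}d\}_{n\ge1}$ is coded by the forward shifts of $\omega=(d_i)_{i\ge1}$. On the other side, the condition that $p$ is irrational is exactly Weyl's criterion for $\{p(n)\bmod1\}_{n\ge1}$ to equidistribute for Lebesgue measure $m$; moreover $p(n)\bmod1$ can be realised as the projection of an orbit $R^{n}z_0$ in a suitable nilsystem $(Z,R,m_Z)$ (a circle rotation when $\deg p=1$), which is distal and therefore of zero entropy. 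I would then form the joint empirical measures $\Theta_N=\frac1N\sum_{n=1}^{N}\delta_{(R^{n}z_0,\,T^{n}d)}$ on $Z\times[0,1]$ and pass to a weak-$*$ limit $\Theta$ along a subsequence. Its marginals are forced to be $m_Z$ (by equidistribution of the nilorbit) and a $T$-invariant measure $\nu$ supported on $Y=\overline{\{T^{n}d\}}$.

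The point of this setup is the following convolution principle. Suppose one can show that, after adding the two coordinates, the distribution of $\{p(n)+2^{n}d\bmod1\}$ contains a Lebesgue-like component, or more modestly that the information carried by $2^{n}d$ that is independent of $p(n)$ has positive "dimension" at every scale. Because pushing a product measure $m_Z\times\nu$ forward under $(x,y)\mapsto x+y\bmod1$ gives the convolution $m*\nu=m$ (convolving Lebesgue on the circle with any probability measure returns Lebesgue), any genuine independence between the two coordinates is converted directly into spreading of the sum. Thus the whole theorem reduces to quantifying how much of the doubling orbit $2^{n}d$ decouples from the polynomial $p(n)$, and to showing that this is enough to force the closure of the sum sequence to have full Hausdorff dimension.

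To produce this independence I would invoke the Bernoulli decomposition of \cite{Wu}. Whenever the dynamics of $2^{n}d$ carries positive entropy, Sinai's factor theorem supplies a Bernoulli factor of $([0,1],T,\nu)$, and by Furstenberg's theorem a $K$-system, in particular a Bernoulli shift, is disjoint from every zero-entropy system. Since the nilsystem generating $p(n)$ has zero entropy, this Bernoulli factor is automatically independent of the polynomial coordinate, and the independent symbols it provides are precisely the randomness that the convolution principle turns into extra dimension of the sum. This is the mechanism by which Wu's method effectively replaces the deterministic digits of $d$ by genuinely independent ones without losing contact with the original sequence.

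The main obstacle, and the reason the full conjecture (containing intervals) is out of reach while the dimension statement survives, is the zero-entropy, or Pinsker, part of the doubling orbit. The independence above only concerns the positive-entropy component of $2^{n}d$; its deterministic part can in principle be correlated with $p(n)$, and a priori such a correlation could confine the sum to a small set. The heart of the proof is therefore to show that this cannot lower the Hausdorff dimension below $1$: one must control, scale by scale, the coupling between a nilsequence and the low-complexity digit dynamics of an arbitrary real number, using both the finer features of the Bernoulli decomposition and the robustness of polynomial equidistribution along the (arithmetically structured, positive-density) sets of times on which a given digit pattern of $d$ occurs. Establishing that the deterministic part contributes no loss of dimension, equivalently that the entropy extracted by the Bernoulli factor can always be combined with the full sweep of $p(n)$ to reach dimension exactly $1$, is where I expect essentially all the difficulty to concentrate.
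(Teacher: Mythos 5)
Your setup matches the paper's: realise $2^nd$ as a doubling-map orbit, realise $p(n)$ as an orbit of a zero-entropy distal system (the paper uses the Weyl skew product $(x_1,\dots,x_n)\mapsto(x_1+\alpha,x_2+x_1,\dots,x_n+x_{n-1})$ on $\mathbb{T}^n$ rather than a general nilsystem), take an invariant measure $\rho$ on the joint orbit closure $Z$, observe it is a joining with Lebesgue marginal on the structured side, and invoke Sinai's factor theorem plus Furstenberg disjointness of Bernoulli from distal systems. But you then declare the ``heart of the proof'' --- handling the zero-entropy part of the doubling orbit --- to be an open difficulty requiring scale-by-scale control of correlations between the nilsequence and the digit dynamics of $d$. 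That is precisely the step you have not supplied, and it is where the paper's actual mechanism lives, so the proposal has a genuine gap rather than being a complete alternative argument.

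The paper's resolution (Theorem \ref{WU}, following Wu) is that no arithmetic control of the deterministic part is needed at all. One chooses the Bernoulli factor $\mathcal{B}$ of $(Y,T,\nu)$ to have \emph{full} entropy $h(T,\nu)$ (Ornstein--Weiss unilateral Sinai theorem), so that the Abramov--Rokhlin formula gives $h(T|\mathcal{B},\nu)=0$. The conditional Shannon--McMillan--Breiman theorem (Lemma \ref{CSMB}) then shows that within almost every fibre $[y]_{\mathcal{B}}$ there is a set $B_{y,\epsilon}$ of positive conditional measure with $\overline{\dim_{\mathcal{A}_0,T_2}}B_{y,\epsilon}\leq\epsilon$, i.e.\ the entire zero-entropy remainder is \emph{dimension-small inside Bernoulli fibres}, whatever its correlation with $p(n)$ may be. Disjointness enters only to show that the conditional measures $\rho^{\mathcal{C}}_{(x,y)}$ of the joining over $\pi_Y^{-1}(\mathcal{B})$ still have Lebesgue marginal on the distal side: since $\rho$ restricted to $\sigma(\mathcal{X}\times\mathcal{B})$ is a joining of a distal finite-height system with a Bernoulli (hence weakly mixing) system, it must be the product measure, so $E_\rho[f|\mathcal{C}]=\int f\,d\mu$ for $f$ on $X$. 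Finally, your ``convolution principle'' is replaced by an elementary set-theoretic inequality, not a convolution of measures (the joining is not a product): taking $A=\pi_2^{-1}(B_\epsilon)\cap Z$ with $\rho'(A)>0$, one shows $\pi_1(A)$ has positive Lebesgue measure and uses $\pi_1(A)\subset\Sigma(A)-\pi_2(A)$ together with $\Haus(\Sigma(A)\times\pi_2(A))\leq\Haus\Sigma(A)+\ubox\pi_2(A)$ to get $\Haus\Sigma(Z)\geq 1-\epsilon$. So the difficulty you expected to concentrate in controlling the Pinsker part dissolves once the Bernoulli factor is taken maximal; supplying that observation, the conditional SMB argument, and the sumset--dimension inequality is what your proposal still needs.
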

	
	We note that there is a curious connection between sequences of form $\{p(n)+2^n d\mod 1\}_{n\geq 1}$ and  $\alpha\beta$-sequences. Let $\alpha,\beta$ be two real numbers, an $\alpha\beta$-sequence $\{x_n\}_{n\geq 1}$ is such that $x_1=0$ and for each $i\geq 1$ we can choose $x_{i+1}=x_i+\alpha\mod 1$ or $x_{i+1}=x_i+\beta\mod 1$ freely. We have the following problem.
	\begin{conj}\label{feng}
		Let $\alpha,\beta$ be such that $1,\alpha,\beta$ are independent over the field of rational numbers. Then any $\alpha\beta$-sequence has full box dimension.
	\end{conj}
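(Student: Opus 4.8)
The plan is to recast an $\alpha\beta$-sequence as a Birkhoff cocycle over a symbolic system and then attack it with a Bernoulli-decomposition/disjointness argument in the spirit of \cite{Wu} and of Theorem \ref{main}. Encode the free choices by $\mathbf c=(c_1,c_2,\dots)\in\{0,1\}^{\mathbb N}$, where $c_i=1$ records $x_{i+1}=x_i+\alpha\bmod 1$ and $c_i=0$ records $x_{i+1}=x_i+\beta\bmod 1$. Writing $\gamma=\alpha-\beta$ and $f(\mathbf d)=\beta+\gamma d_1\bmod 1$ (with $d_1$ the first coordinate), one has
\[
x_{n+1}=\sum_{i=0}^{n-1}f(\sigma^i\mathbf c)=n\beta+\gamma\sum_{i=1}^{n}c_i\pmod 1,
\]
where $\sigma$ is the left shift. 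Thus the $\alpha\beta$-sequence is exactly the $\mathbb T$-orbit of $0$ under the skew product $T(\mathbf d,t)=(\sigma\mathbf d,\,t+f(\mathbf d))$ on $X\times\mathbb T$, with $X=\overline{\{\sigma^m\mathbf c\}}$ the orbit closure of $\mathbf c$, and the conjecture $\lbox\overline{\{x_n\}}=1$ becomes a statement about the size of the fibre along the orbit of $\mathbf c$. Note that $1,\beta,\gamma$ are $\mathbb Q$-independent iff $1,\alpha,\beta$ are, so the increment $f$ always has irrational ``spread''.

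Before invoking any dynamics I would dispose of the degenerate geometry. If $\mathbf c$ contains runs of a single symbol of unbounded length, say arbitrarily long blocks of $1$'s (the other case being symmetric), then $\overline{\{x_n\}}$ contains arbitrarily long segments of the rotation orbit $\{t+m\alpha\bmod 1\}_{m\ge 0}$; since $\alpha$ is irrational these segments become $\varepsilon$-dense for every $\varepsilon>0$, so $\overline{\{x_n\}}=[0,1]$ and the box dimension is trivially $1$. Hence I may assume all runs are bounded. The main engine would then be an entropy dichotomy applied to the invariant measures $\mu$ on $(X,\sigma)$ arising as weak-$*$ limits of the empirical measures of $\mathbf c$. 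If some such $\mu$ has positive entropy, Sinai's factor theorem supplies a Bernoulli factor, and, exactly as in \cite{Wu}, a Bernoulli system carrying the cocycle $f$ of irrational spread is disjoint from every rotation, so the Birkhoff sums $\sum_{i<n}f$ should equidistribute in $\mathbb T$ along the fibre and again force $\overline{\{x_n\}}=[0,1]$. The case is cleaner here because a quantitative, scale-by-scale form of this equidistribution upgrades density to a lower box dimension bound; the only subtlety is passing from $\mu$-generic behaviour to the specific orbit of $\mathbf c$.

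The hard case, and the main obstacle, is when every such $\mu$ has zero entropy, so that Sinai's theorem gives nothing and $(X,\sigma)$ may carry rotational (more generally distal) factors. The borderline is the Sturmian case $c_i=\lfloor(i+1)\omega\rfloor-\lfloor i\omega\rfloor$, where $\sum_{i\le n}c_i=\lfloor n\omega\rfloor$ and, up to a fixed additive constant,
\[
x_n\equiv n\rho-\gamma\{n\omega\}\pmod 1,\qquad \rho:=\beta+\omega\gamma .
\]
Here one wins by direct computation: when $1,\rho,\omega$ are $\mathbb Q$-independent the pair $(n\rho,\{n\omega\})$ equidistributes on $\mathbb T^2$ and $\overline{\{x_n\}}$ is the image of $\mathbb T^2$ under a surjection onto $\mathbb T$, hence $[0,1]$; and in the resonant situations (including $\rho\in\mathbb Q$) the closure of $(n\rho,\{n\omega\})$ is a finite union of line segments, whose image under $(t,u)\mapsto t-\gamma u$ is a finite union of non-degenerate arcs, since the slope coefficient $1-\gamma s$ can never vanish for rational $s$ because $\gamma$ is irrational. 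In every Sturmian case one therefore gets box dimension $1$, and this computation already isolates the mechanism: the fluctuation of $\sum c_i$, once weighted by the irrational number $\gamma$, cannot be trapped in a low-dimensional set.

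I expect the genuine difficulty to lie in carrying this mechanism over to an arbitrary zero-entropy aperiodic $\mathbf c$ (for instance a substitution sequence such as Thue--Morse), which is neither Sturmian nor periodic. Writing $\sum_{i\le n}c_i=\theta n+E_n$ along a subsequence where the empirical measure converges, one has $x_n\equiv n\rho+\gamma E_n\pmod 1$ with $\rho=\beta+\theta\gamma$, and the fluctuation $E_n$ is now governed by an essentially arbitrary zero-entropy (distal) system. The crux is to prove that $(n\rho,E_n)$ cannot conspire to keep $x_n$ inside a set of box dimension $<1$; this seems to require a joinings/disjointness statement between the rotation by $\rho$ and the distal factor carrying $E_n$, together with a genuinely quantitative covering-number input, because the dynamical equidistribution available above concerns $\mu$-generic points whereas the conjecture demands a lower box dimension bound that is uniform over the single, adversarially chosen orbit of $\mathbf c$. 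Reconciling this pointwise, scale-by-scale demand with the measure-theoretic tools of \cite{Wu} and \cite{F67} is, I believe, where the real work and the main risk of the argument reside.
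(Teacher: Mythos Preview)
The statement you are addressing is Conjecture~\ref{feng}, which the paper explicitly leaves open: there is no proof to compare against. The paper even remarks that ``the method in this paper cannot be used directly for $\alpha\beta$-sequences'' and cites Katznelson's construction \cite{K79} of an $\alpha\beta$-sequence whose closure fails to have full \emph{Hausdorff} dimension. So there is nothing in the paper that your proposal could be matched to.

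As a strategy, your outline is reasonable and you are candid about its incompleteness, but let me flag two concrete gaps. First, even your ``clean'' positive-entropy branch is not actually settled: the Bernoulli-decomposition machinery of Theorem~\ref{WU} produces statements about $\mu$-typical points of an invariant measure supported on the orbit closure, not about the distinguished orbit of $\mathbf c$ itself, and for lower box dimension you need a covering bound valid along \emph{that} orbit at \emph{every} scale. You note this issue yourself for the zero-entropy branch, but it is equally present in the positive-entropy branch; nothing in the Wu/Sinai argument automatically transfers from a generic fibre to a specific one. Second, and more structurally, the paper's mechanism gives Hausdorff-dimension conclusions (Theorem~\ref{main}), yet Katznelson's example shows the Hausdorff analogue of Conjecture~\ref{feng} is false. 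That is strong evidence that a direct transplant of the Bernoulli-decomposition/disjointness method cannot succeed here without a genuinely new, scale-uniform ingredient that distinguishes box dimension from Hausdorff dimension; your proposal does not supply such an ingredient.

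In short: the paper contains no proof of this statement, your sketch is an honest research plan rather than a proof, and the zero-entropy case together with the generic-to-specific passage are real obstacles, not just technicalities.
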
 
	This conjecture is related to affine embeddings between Cantor sets, symbolic dynamics and Diophantine approximation, see \cite{K79}, \cite{FX18} and \cite{Y18}. A lot of ideas for proving Theorem \ref{main} appeared in \cite{Y18} for $\alpha\beta$-sets. For this reason, we can consider Theorem \ref{main} as a cousin of Conjecture \ref{feng}. Although the method in this paper cannot be used directly for $\alpha\beta$-sequences, it still sheds some lights on Conjecture \ref{feng}. However, at this stage, we mention that in \cite{K79} there is a construction of an $\alpha\beta$-sequence whose closure does not have full Hausdorff dimension. 
	
	We also consider here a number-theoretic result which is closely related to what has been discussed. Let $m$ be an odd number. We consider the ring $R[m]$ of residues modulo $m.$ It is the finite set $\{0,\dots,m-1\}$ together with the integer multiplication and addition modulo $m$. In this setting, we can also consider the sequence $\{2^n+cn\mod m\}_{n\geq 0}$ where $c$ is an integer such that $gcd(c,m)=1.$ On one hand, the $+c \mod m$ action on $R[m]$ can be seen as uniquely ergodic, which is analogous to $+\alpha\mod 1$ action on the unit interval with an irrational number $\alpha.$ On the other hand, $\{2^n \mod m\}_{n\geq 0}$ is an orbit under the $\times 2\mod m$ action. An analogy of Theorem \ref{main} would be that $\{2^n+cn\mod m\}_{n\geq 0}$ is large in $R[m].$ We show the following result which confirms this intuition. We remark that the method for proving the following result shares some strategies for proving Theorem \ref{main}. 
	\begin{thm}\label{num}
		Let $m\geq 3$ be an odd number and $c$ be such that $gcd(c,m)=1$. Let $D(m)$ be the number of residue classes visited by $\{2^n+cn\mod m\}_{n\geq 0}.$ Then $D(m)=m.$ In other words, for each $r\in R[m],$ there is an integer $n_r$ such that $2^{n_r}+cn_r\equiv r\mod m.$
	\end{thm}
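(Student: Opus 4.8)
The plan is to exploit the near-multiplicative structure of the sequence $\{2^n+cn \bmod m\}$ by fibering the index $n$ according to its residue modulo the multiplicative order of $2$, and then to set up a recursion on the modulus $m$. Write $T=\mathrm{ord}_m(2)$ for the multiplicative order of $2$ in the unit group of $R[m]$, which exists since $m$ is odd, and put $g=\gcd(T,m)$. The fundamental observation is that along each arithmetic progression $n\equiv a \pmod{T}$ the exponential term freezes, $2^n\equiv 2^a$, while the linear term $cn$ sweeps out a full coset of a fixed subgroup; this is what converts an apparently erratic orbit into a union of cosets that can be counted exactly.

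First I would make the fibering precise. For fixed $a$ with $0\le a<T$ and $n=a+jT$ with $j\ge 0$, we have $cn\equiv ca+cjT\pmod m$; since $\gcd(c,m)=1$ the elements $cjT\bmod m$ range exactly over the cyclic subgroup generated by $cT$, which coincides with the one generated by $T$ because $c$ is a unit, namely the set $gR[m]$ of multiples of $g$. Hence $\{2^n+cn\bmod m:\ n\equiv a\ (\mathrm{mod}\ T)\}$ is precisely the coset $2^a+ca+gR[m]$, a set of cardinality $m/g$. This coset depends on $a$ only through the residue $(2^a+ca)\bmod g$, so distinct attained cosets are disjoint and equinumerous, which yields the recursion
\[
D(m)=\frac{m}{g}\,\bigl|\{(2^a+ca)\bmod g:\ a\ge 0\}\bigr|=\frac{m}{g}\,D(g),
\]
where on the right $D(g)$ is the analogous count for the smaller modulus $g$ with the same multiplier $c$. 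This substitution is legitimate because $g\mid m$ forces $g$ to be odd and $\gcd(c,g)=1$, so the reduced problem is again an instance of the theorem.

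Finally I would close the argument by strong induction on $m$. The point that makes the induction terminate is that the modulus strictly decreases: $T=\mathrm{ord}_m(2)\le\varphi(m)<m$ for Euler's totient $\varphi$, so $g=\gcd(T,m)\le T<m$. The base case $g=1$ gives $D(m)=m\cdot D(1)=m$ directly, and otherwise the inductive hypothesis $D(g)=g$ yields $D(m)=(m/g)\cdot g=m$. I expect the main obstacle to be the coset bookkeeping in the middle step: verifying cleanly that the linear orbit $\{cjT\bmod m\}_{j\ge 0}$ fills the entire subgroup $gR[m]$ and that the cosets of this subgroup are correctly indexed by residues modulo $g$. Once this is in place, the self-similar recursion $D(m)=(m/g)D(g)$ together with the bound $T<m$ makes the conclusion $D(m)=m$ immediate.
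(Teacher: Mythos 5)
Your proof is correct and takes essentially the same approach as the paper's: both freeze $2^n$ along the progressions $n\equiv a\pmod{\mathrm{ord}_m(2)}$, observe that the linear term then sweeps out the subgroup of multiples of $g=\gcd(\mathrm{ord}_m(2),m)$, and reduce to the strictly smaller odd modulus $g$ by iteration. Your write-up merely tidies the paper's sketch by carrying a general unit $c$ throughout, recording the exact recursion $D(m)=(m/g)D(g)$, and justifying termination explicitly via $g\le \mathrm{ord}_m(2)\le\varphi(m)<m$.
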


The above result is a special case of Problem 6 in the third round of the 27-th Brazilian Mathematical Olympiad, see \cite{MO}.

	\section{definitions and notations}
	\subsection{Logarithm} We make the convention that the $\log$ function has base $2.$
	
	\subsection{Dimensions}
	We list here some basic definitions of dimensions mentioned in the introduction.  For more details, see \cite[Chapters 2,3]{Fa} and \cite[Chapters 4,5]{Ma1}. We shall use $N(F,r)$ for the minimal covering number of a set $F$ in $\mathbb{R}^n$ with closed balls of side length $r>0$. 
	
	\subsubsection{Hausdorff dimension}
	
	Let $g: [0,1)\to [0,\infty)$ be a continuous function such that $g(0)=0$. Then for all $\delta>0$ we define the following quantity
	\[
	\mathcal{H}^g_\delta(F)=\inf\left\{\sum_{i=1}^{\infty}g(\mathrm{diam} (U_i)): \bigcup_i U_i\supset F, \mathrm{diam}(U_i)<\delta\right\}.
	\]
	The $g$-Hausdorff measure of $F$ is
	\[
	\mathcal{H}^g(F)=\lim_{\delta\to 0} \mathcal{H}^g_{\delta}(F).
	\]
	When $g(x)=x^s$ then $\mathcal{H}^g=\mathcal{H}^s$ is the $s$-Hausdorff measure and Hausdorff dimension of $F$ is
	\[
	\Haus F=\inf\{s\geq 0:\mathcal{H}^s(F)=0\}=\sup\{s\geq 0: \mathcal{H}^s(F)=\infty          \}.
	\]
	\subsubsection{Box dimensions}
	The upper box dimension of a bounded set $F$ is
	\[
	\overline{\nbox} F=\limsup_{r\to 0} \left(-\frac{\log N(F,r)}{\log r}\right).
	\]
	Similarly the lower box dimension of $F$ is
	\[
	\lbox F=\liminf_{r\to 0} \left(-\frac{\log N(F,r)}{\log r}\right).
	\]
	If the limsup and liminf are equal, we call this value the box dimension of $F$ and we denote it as $\nbox F.$
	\subsection{The unconventional fractional part symbol}\label{Ab}
	For a real number $\alpha$, it is conventional to use $\{\alpha\}$ for its fractional part. It is unfortunate that $\{.\}$ is also used to denote a set or a sequence as well. For this reason we will use $\mod 1$ for the fractional part. More precisely, for a real number $x$ we write $x\mod 1$ to denote the unique number $a$ in $[0,1)$ such that $a-x$ is an integer.
	\subsection{Sets and sequences}
	We write $\{x_n\}_{n\geq 1}$ for the sequence $x_1x_2x_3\dots.$ Sometimes it is convenient to use $\{x_n\}_{n\geq 1}$ to denote the following set
	\[
	\{x: \exists n\in\mathbb{N}, x=x_n\}.
	\]
	Thus $\overline{\{x_n\}_{n\geq 1}}$ and $\lbox \{x_n\}_{n\geq 1}$ should be understood in this way.
	\subsection{Filtrations, atoms and entropy}\label{ATOM}
	Let $X$ be a set with $\sigma$-algebra $\mathcal{X}.$ A filtration of $\sigma$-algebras is a sequence $\mathcal{F}_n\subset\mathcal{X},n\geq 1$ such that
	\[
	\mathcal{F}_1\subset \mathcal{F}_2\subset\dots \subset \mathcal{X}.
	\]
	Given a measurable map $S:X\to X$ and a finite measurable partition $\mathcal{A}$ of $X$, we denote $S^{-n} \mathcal{A}$ to be the following finite collection of sets (notice that $S$ might not be invertible)
	\[
	\{S^{-n}(A) :A\in \mathcal{A}\}.
	\]
	Then we use $\vee_{i=0}^{n-1} S^{-i}\mathcal{A}$ to be the $\sigma$-algebra generated by $S^{-i} \mathcal{A},i\in [0,n-1].$ An atom in $\vee_{i=0}^{n-1} S^{-i}\mathcal{A}$ is a set $A$ that can be written as
	\[
	A=\bigcap_{i} C_i
	\]
	where for each $i\in\{0,\dots,n-1\}$,  $C_i\in S^{-i}\mathcal{A}$.
	In this sense $\vee_{i=0}^{n-1} S^{-i}\mathcal{A}$ is generated by a finite partition $\mathcal{A}_{n-1}$ of $X$ which is finer than $\mathcal{A}.$ Let $\mu$ be a probability measure, then we define the Shannon entropy of $\mu$ with respect to a finite partition $\mathcal{A}$ as follows
	\[
	H(\mu,\mathcal{A})=-\sum_{A\in\mathcal{A}} \mu(A)\log \mu(A).
	\]
	We define the entropy of $S$ as follows
	\[
	h(S,\mu)=\lim_{n\to\infty} \frac{1}{n} H(\mu, \mathcal{A}_{n-1}),
	\]
	where $\mathcal{A}$ is a partition such that $\vee_{i=1}^\infty S^{-i}\mathcal{A}=\mathcal{X}.$ Here we implicitly assumed that such a generating partition exists and used Sinai's entropy theorem, see \cite[Lemma 8.8]{PY}.
	
	Let $\mathcal{Y}\subset \mathcal{X}$ be an $S$-invariant $\sigma$-algebra, i.e. $S^{-1}(\mathcal{Y})\subset \mathcal{Y}.$ Let $n\geq 1$ be an integer. We define the conditional information function of $\mathcal{A}_n$ conditioned on $\mathcal{Y}$ as follows,
	\[
	I_{\mu,\mathcal{A}_n|\mathcal{Y}}(x)=-\log E_\mu[\mathbbm{1}_{A_n(x)}|\mathcal{Y}](x).
	\]
	Here, $A_n(x)$ is the atom of $\mathcal{A}_n$ which contains $x\in X.$ Then, we define the conditional Shannon entropy of $\mathcal{A}_n$ conditioned on $\mathcal{Y}$ as
	\[
	H(\mu, \mathcal{A}_n|\mathcal{Y})=\int I_{\mu,\mathcal{A}_n|\mathcal{Y}}(x)d\mu(x).
	\]
	Finally, we define the conditional entropy of $S$ conditioned on $\mathcal{Y}$ as
	\[
	h(S|\mathcal{Y},\mu)=\lim_{n\to\infty} \frac{1}{n} H(\mu, \mathcal{A}_{n-1}|\mathcal{Y}).
	\]
	All the above quantities are well defined, see \cite[Chapters 1,2]{D11} for more details.
	\subsection{Factors}\label{FACTOR}
	A measurable dynamical system is in general denoted as $(X,\mathcal{X},S,\mu)$ where $X$ is a set with $\sigma$-algebra $\mathcal{X}$, a measure $\mu$ (in this paper, $\mu$ will be a probability measure) and a measurable map $S:X\to X.$ In case when $\mathcal{X}$ is clear in context we do not explicitly write it down. Given two dynamical systems $(X,\mathcal{X},S,\mu)$, $(X_1,\mathcal{X}_1,S_1,\mu_1)$, a measurable map $f:X\to X_1$ is called a factorization map and $(X_1,\mathcal{X}_1,S_1,\mu_1)$ is called a factor of $(X,\mathcal{X},S,\mu)$ if $\mu_1=f\mu$ and $f\circ S(x)=S_1\circ f(x)$ holds for $\mu$ almost all $x\in X.$ 
	
	Another way of viewing factors is via invariant sub $\sigma$-algebras. Let $\mathcal{Y}\subset\mathcal{X}$ be a sub-$\sigma$-algebra which is invariant under the map $S.$ Then $(X,\mathcal{Y},S,\mu)$ can be seen as a factor of $(X,\mathcal{X},S,\mu)$ via the identity map. We can take $\mathcal{Y}=f^{-1}(\mathcal{X}_1)$ in the previous paragraph. In this measure theoretical sense, $(X_1,\mathcal{X}_1,S_1,\mu_1)$ and $(X,\mathcal{Y},S,\mu)$ can be viewed as the same dynamical system.
	\subsection{Bernoulli system}
	Let $\Lambda$ be a finite set of symbols and let $\Omega=\Lambda^{\mathbb{N}}$ be the space of one sided infinite sequences over $\Lambda.$ We define $S$ to be the shift operator, namely, for $\omega=\omega_1\omega_2\dots\in\Omega,$
	\[
	S(\omega)=\omega_2\omega_3\dots.
	\]
	We take the $\sigma$-algebra on $\Omega$ generated by cylinder subsets. A cylinder subset $Z\subset\Omega$ is such that
	$
	Z=\prod_{i\in\mathbb{N}}Z_i
	$
	and $Z_i=\Lambda$ for all but finitely many integers $i\in\mathbb{N}.$ We construct a probability measure $\mu$ on $\Omega$ by giving a probability measure $\mu_{\Lambda}=\{p_\lambda\}_{\lambda\in \Lambda}$ on $\Lambda$ and set $\mu=\mu^{\mathbb{N}}_{\Lambda}.$ We require here that $p_\lambda\neq 0$ for all $\lambda\in \Lambda$. Then this system is weak-mixing and has entropy $h(S,\mu)=\sum_{\lambda\in\Lambda} -p_\lambda\log p_\lambda.$ We call this system a Bernoulli system. 

	\subsection{Joinings}
	Let $(X,\mathcal{X},S,\mu)$ and $(Y,\mathcal{Y},T,\nu)$ be two measurable dynamical systems. A joining between those two dynamical systems is an $S\times T$ invariant probability measure $\rho$ on $X\times Y$ (with respect to the product $\sigma$-algebra $\sigma(\mathcal{X}\times \mathcal{Y})$) such that $\pi_X \rho=\mu, \pi_Y \rho=\nu.$ The two systems $(X,\mathcal{X},S,\mu)$ and $(Y,\mathcal{Y},T,\nu)$ are \emph{disjoint} if the only joining is the product measure $\mu\times \nu.$ The follow example can be found in \cite[Theorem I.4]{F67}.
	
	\begin{exm}\label{EXM}
		Let $(X,\mathcal{X},S,\mu)$ be a measure theoretically
		distal ergodic system with finite height. Let $(Y,\mathcal{Y},T,\nu)$ be a weakly mixing system. Then $(X,\mathcal{X},S,\mu)$ and $(Y,\mathcal{Y},T,\nu)$ are disjoint.
	\end{exm}
	
	A measure theoretically distal ergodic system with finite height is obtained from a Kronecker system with finitely many ergodic group extensions. For example, irrational rotations on $\mathbb{T}=\mathbb{R}/\mathbb{Z}$ with the Lebesgue measure are Kronecker systems. The transformation $(x,y)\in\mathbb{T}^2\to (x+\alpha,x+y)$ on $\mathbb{T}^2$ with $\alpha\notin \mathbb{Q}$ is obtained from an irrational rotation with an ergodic group extension. In this paper, we will also consider the  transformation $(x_1,\dots,x_n)\in\mathbb{T}^n\to (x_1+\alpha,x_2+x_1,x_3+x_2,\dots,x_n+x_{n-1})$ on $\mathbb{T}^n.$ The above are examples of measure theoretically distal ergodic systems with finite height.

	\section{A mathematical Olympiad problem}
	We first illustrate a short proof of Theorem \ref{num}, which provides us with some motivation.
	\begin{proof}[Proof of Theorem \ref{num}]
		Let $l=ord(2,m)$ be the order of $2$ in the multiplication group $(\mathbb{Z}/m\mathbb{Z})^*.$ This can be done because $gcd(2,m)=1.$ For convenience, we consider $c=1$ and note that other cases can be shown with the same method. Since $l=ord(2,m)$ we consider the following sequence
		\[
		\{2^{nl}+nl\mod m\}_{n\geq 0}.
		\]
		We see that $2^{nl}\equiv 1\mod m$ for all $n\geq 0.$ However $H=\{nl \mod m\}_{n\geq 0}$ is a subgroup of $\mathbb{Z}/m\mathbb{Z}$ of order $m/gcd(l,m).$ For convenience we write $\Delta=gcd(l,m).$ This $\Delta$ plays the same role of the entropy in the proof of Theorem \ref{WU} which leads to Theorem \ref{main}. If $\Delta=1$ then $D(m)=m$ follows automatically. We consider the case when $\Delta>1.$ Now for each integer $r$ we consider the following sequence
		\[
		\{2^{r+nl}+r+nl\mod m\}.
		\]
		This sequence forms a coset of $H.$ More precisely it is $2^r+r+H.$ Now if $\{2^r+r\mod \Delta\}_{r\geq 0}$ would visit all residue classes modulo $\Delta,$ then $2^r+r+H,r\geq 0$ would visit all cosets of $H$ in $\mathbb{Z}/m\mathbb{Z}$ and $\{2^n+n\}_{n\geq 1}$ would visit all residue classes modulo $m.$
		Since $\Delta$ is an odd number as well we see that we have reduced the problem for $m$ to the problem for $\Delta$ which is strictly smaller than $m.$ We can iterate this reduction procedure. Since we are considering positive integer set, either we eventually obtain $\Delta=1$ or else we can consider further $gcd(\Delta,ord(2,\Delta))<\Delta.$ The latter can not happen infinitely often. This concludes the proof.
	\end{proof}
	
	\section{A consequence of Sinai's factor theorem}
	In this section, we discuss a consequence of Sinai's factor theorem. As mentioned in the introduction, this section is strongly influenced by \cite[Section 6]{Wu}. To some extent, the idea resembles the arguments in the previous section. We start this section by introducing the set-ups and making some standard considerations.
	
	Let $(X,\mathcal{X},S,\mu)$ be a measure theoretically distal ergodic system with finite height. Here we assume that $\mu$ is a probability measure on the $\sigma$-algebra $\mathcal{X}$. Let $(Y,\mathcal{Y},T,\nu)$ be an ergodic measurable dynamical system. Furthermore, we require that $T$ admits a finite generator, i.e. a finite measurable partition $\mathcal{A}_0$ of $Y$ such that $\vee_{i=0}^{\infty} T^{-i}\mathcal{A}_0$ is $\mathcal{Y}$. For convenience, we put the following definition.
	
	\begin{defn}\label{BALL}
		Let $(Y,T,\nu), \mathcal{A}_0$ be as given in above. Let $B\subset Y.$ For each integer $n\geq 1,$ we define $N_{\mathcal{A}_0,S,n}(B)$ to be the number of atoms in $\mathcal{A}_n$ intersecting $B.$ Then we define the following quantities:
		\[
		\overline{\dim_{\mathcal{A}_0,S}} B=\limsup_{n\to\infty}\frac{\log N_{\mathcal{A}_0,S,n}(B)}{n}.
		\]
		\[
		\underline{\dim_{\mathcal{A}_0,S}} B=\liminf_{n\to\infty} \frac{\log N_{\mathcal{A}_0,S,n}(B)}{n}.
		\]
	\end{defn}
	
	For example, given $\lambda>0$, if $Y\subset\mathbb{R}$ and $diam(A_n(x))= O(2^{-\lambda n})$ uniformly for all $n,x$ then 
	\[
	N(B,2^{-\lambda n})=O(N_{\mathcal{A}_0,S,n}(B)).
	\]
	In this case, if $    \overline{\dim_{\mathcal{A}_0,S}} B=0$ then $\ubox B=0.$ The main goal of this section is to show the following result which is a variant of Wu's ergodic theoretic result in \cite[Section 6]{Wu}.
	
	\begin{thm}\label{WU}\footnote{Later on, we only use this result with $X,Y$ being compact metric spaces with Borel $\sigma$-algebras and $\dim_{\mathcal{A}_0,S}$ is equivalent to the box counting dimension on $Y.$}
		Let $(X,S,\mu),(Y,T,\nu)$ be as stated in above. Let $\rho$ be a joining between those two systems. Then $\rho$ admits a $\sigma(\mathcal{X}\times\mathcal{Y})$-measurable measure disintegration
		\[
		\rho=\int _{\Omega}\rho_\omega d\omega,
		\]
		where $(\Omega,d\omega)$ is a probability space such that for each $\epsilon>0$, there is a set $E$ with positive $d\omega$ measure and for $\omega\in E$,
		\begin{itemize}
			\item $\pi_X \rho_\omega=\mu.$
			\item There is a $\mathcal{Y}$-measurable set $B_\omega\subset Y$ such that $    \overline{\dim_{\mathcal{A}_0,S}} B_\omega\leq \epsilon$ and $\rho_\omega(\pi^{-1}_Y(B_\omega))>0.$
		\end{itemize}
		
	\end{thm}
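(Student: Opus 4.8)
The plan is to realise the probability space $(\Omega,d\omega)$ as a Bernoulli factor of the joined system $(X\times Y,\sigma(\mathcal{X}\times\mathcal{Y}),S\times T,\rho)$ and to disintegrate $\rho$ over it, in the spirit of \cite[Section 6]{Wu}. Write $R=S\times T$. First I would reduce to the case that $\rho$ is ergodic: since $\mu$ and $\nu$ are ergodic, in the ergodic decomposition of $\rho$ almost every component still projects to $\mu$ and to $\nu$, hence is itself an ergodic joining, so it suffices to treat each component. The two facts I want to extract from the hypotheses are, on the one hand, that a measure theoretically distal system of finite height has zero entropy, so that $h(S,\mu)=0$, and on the other hand the disjointness recorded in Example \ref{EXM}. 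From $h(S,\mu)=0$ together with the general bounds $h(T,\nu)\le h(R,\rho)\le h(S,\mu)+h(T,\nu)$ valid for a joining, I obtain $h(R,\rho)=h(T,\nu)=:h$.

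Next I would apply Sinai's factor theorem to the ergodic system $(X\times Y,R,\rho)$ to produce a Bernoulli factor $(\Omega,\sigma,\beta)$ of full entropy $h$, and take $(\Omega,d\omega)=(\Omega,\beta)$ with the associated Rokhlin disintegration $\rho=\int_\Omega\rho_\omega\,d\omega$. The point of fixing the entropy to be exactly $h$ is that the Abramov--Rokhlin addition formula then gives relative entropy $h(R\mid\Omega,\rho)=h(R,\rho)-h=0$; this single choice of $\Omega$ will serve for every $\epsilon$ at once. To verify the first bullet point I would use disjointness: the factor maps $\pi_X$ and the Bernoulli factor map $p_\Omega$ together push $\rho$ forward to a joining $\tilde\rho$ of $(X,S,\mu)$ and $(\Omega,\sigma,\beta)$, and since the former is distal of finite height while the latter is Bernoulli, hence weakly mixing, Example \ref{EXM} forces $\tilde\rho=\mu\times\beta$. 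Thus $X$ and $\Omega$ are independent under $\rho$, so the conditional law of $\pi_X$ given $\Omega$ is the unconditional law $\mu$, which is exactly $\pi_X\rho_\omega=\mu$ for $\beta$-a.e. $\omega$.

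It remains to produce the low-complexity sets $B_\omega$, and this is where the conditional Shannon--McMillan--Breiman theorem and the choice $h(R\mid\Omega,\rho)=0$ enter. Writing $\tilde{\mathcal{A}}_0=\pi_Y^{-1}(\mathcal{A}_0)$, which generates $\pi_Y^{-1}(\mathcal{Y})$ under $R$, I would first argue that $h(R,\tilde{\mathcal{A}}_0\mid\Omega,\rho)=h(R\mid\Omega,\rho)=0$, the first equality using that the complementary factor $X$ carries zero entropy. The relative SMB theorem then yields, for $\rho$-a.e. $(x,y)$, that $-\tfrac1n\log\rho_\omega\bigl(\tilde{\mathcal{A}}_n(x,y)\bigr)\to 0$, where $\omega=p_\Omega(x,y)$. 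Fixing $\epsilon>0$ and $0<\delta<\epsilon$, an application of Egorov's theorem produces a set $G$ with $\rho(G)$ close to $1$ and an $N_0$ such that $\rho_\omega(\tilde{\mathcal{A}}_n(x,y))\ge 2^{-\delta n}$ for all $(x,y)\in G$ and $n\ge N_0$. Setting $B_\omega=\pi_Y\bigl(G\cap p_\Omega^{-1}(\omega)\bigr)$, each $\mathcal{A}_n$-atom meeting $B_\omega$ pulls back under $\pi_Y$ to a $\tilde{\mathcal{A}}_n$-atom of $\rho_\omega$-mass at least $2^{-\delta n}$, so there are at most $2^{\delta n}$ of them; hence $\overline{\dim_{\mathcal{A}_0,S}}B_\omega\le\delta<\epsilon$, while $\rho_\omega(\pi_Y^{-1}(B_\omega))\ge\rho_\omega(G)>0$ on the positive-measure set $E=E(\epsilon)=\{\omega:\rho_\omega(G)>0\}$.

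I expect the main obstacle to be the entropy bookkeeping of the last step: justifying $h(R,\tilde{\mathcal{A}}_0\mid\Omega,\rho)=0$ from $h(S,\mu)=0$, and running the conditional SMB theorem so as to control the $\limsup$ defining $\overline{\dim_{\mathcal{A}_0,S}}$ rather than merely an almost-everywhere limit. The passage from pointwise convergence to a genuine set $B_\omega$ of positive conditional mass, carried out via Egorov, is precisely what forces the $\epsilon$-dependence of the positive-measure set $E$ appearing in the statement; the remaining inputs (the zero entropy of $X$ and the disjointness of Example \ref{EXM}) are used only in the clean structural form described above.
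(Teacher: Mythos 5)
Your proposal follows the same Wu-style scheme as the paper --- Bernoulli factor plus Abramov--Rokhlin to kill relative entropy, conditional Shannon--McMillan--Breiman plus a mass-versus-counting argument for the low-complexity sets, and disjointness via Example \ref{EXM} for the marginal condition --- but you anchor the Bernoulli factor in a different system, and that choice creates a genuine gap. The paper takes its Bernoulli factor inside $(Y,\mathcal{Y},T,\nu)$, which is ergodic \emph{by hypothesis}, and pulls it back through $\pi_Y$ to get $\mathcal{C}=\pi_Y^{-1}(\mathcal{B})$; you instead take a full-entropy Bernoulli factor of the joined system $(X\times Y, S\times T,\rho)$. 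Since Theorem \ref{WU} does not assume $\rho$ ergodic, Ornstein--Weiss \cite{OW75} does not apply to $(X\times Y,\rho)$ directly, which is why you must prepend an ergodic decomposition. But this ``reduction'' is where your argument breaks: the ergodic components $\rho_c$ typically each carry zero weight, so to produce the single $\sigma(\mathcal{X}\times\mathcal{Y})$-measurable disintegration of $\rho$ and the single positive-measure set $E$ demanded by the statement, you must choose, \emph{measurably in $c$}, a Bernoulli factor map for each component, together with the Egorov set $G_c$ and the threshold $N_0(c)$. The Ornstein--Weiss factor is highly non-canonical, no measurable-selection argument is offered, and it is not clear one comes cheaply. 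The paper avoids the issue entirely: its Bernoulli factor is built once on $(Y,\nu)$, independently of $\rho$, and the disjointness in Example \ref{EXM} applies to arbitrary (not just ergodic) joinings, so non-ergodic $\rho$ costs nothing. Your route also needs Parry's theorem that distal finite-height systems have zero entropy (to get $h(S\times T,\rho)=h(T,\nu)$); that fact is true but uncited, and the paper never needs it.

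A second, smaller defect is repairable but must be addressed: your $B_\omega=\pi_Y\bigl(G\cap p_\Omega^{-1}(\omega)\bigr)$ is a projection of a measurable subset of $X\times Y$, hence in general analytic and not $\mathcal{Y}$-measurable as the theorem requires. One can fix this by replacing $B_\omega$ with $\bigcap_{n\geq N_0}\bigcup\{A\in\mathcal{A}_n: A\cap B_\omega\neq\emptyset\}$, which is $\mathcal{Y}$-measurable, contains $B_\omega$, and meets exactly the same $\mathcal{A}_n$-atoms for $n\geq N_0$, so the counting bound survives. Note how the paper's placement of the factor sidesteps this too: since $\mathcal{B}\subset\mathcal{Y}$, the fibre $[y]_{\mathcal{B}}$ lies in $Y$ and the set $B_{n_0}\cap[y]_{\mathcal{B}}$ of Lemma \ref{CSMB} is automatically $\mathcal{Y}$-measurable, no projection ever occurring. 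The remainder of your outline is sound and, in the ergodic case, essentially parallel to the paper: your worry about justifying $h(S\times T,\tilde{\mathcal{A}}_0\mid\Omega,\rho)=0$ is unfounded, as the conditional entropy of a fixed partition is dominated by the conditional entropy of the system, which is zero by your full-entropy choice of factor; your Egorov step is the analogue of the paper's sets $B_k$ in Lemma \ref{CSMB}; and your disjointness argument for $\pi_X\rho_\omega=\mu$ is the same as the paper's, applied to the joining $(\pi_X,p_\Omega)_*\rho$ rather than to $\rho$ restricted to $\sigma(\mathcal{X}\times\mathcal{B})$.
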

	The proof of this theorem will be divided into two parts. Our first step is as follows.
	\subsection{Step One: The conditional Shannon-McMillan-Breiman theorem and a counting argument}
	\begin{lma}\label{CSMB}
		Let $(Y,T,\nu),\mathcal{A}_0$ be as stated in the beginning of this section. Let $\mathcal{B}$ be a countably generated $T$-invariant sub $\sigma$-algebra of $\mathcal{Y}.$ Suppose that the conditional entropy $h(T|\mathcal{B},\nu)=0.$ Then for $\nu.a.e$ $y\in Y$ and all $\epsilon>0$, there is a $\mathcal{Y}$-measurable set $B_{y,\epsilon}$ with     $\overline{\dim_{\mathcal{A}_0,S}} B_{y,\epsilon}\leq \epsilon.$ Moreover,for each $\epsilon>0,$ there is a $\mathcal{B}$-measurable set $E$ with positive $\nu$ measure and $\nu^{\mathcal{B}}_y(B_{y,\epsilon})>0$ for $y\in E.$ 
	\end{lma}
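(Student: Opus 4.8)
The plan is to read off everything from the conditional Shannon--McMillan--Breiman theorem applied to the generating partition $\mathcal{A}_0$ relative to the invariant factor $\mathcal{B}$, and then to run an elementary count of the atoms of $\mathcal{A}_n$ weighted by the conditional measures $\nu^{\mathcal{B}}_y$. The point is that the hypothesis $h(T|\mathcal{B},\nu)=0$ forces the conditional mass of the atom through a typical point to decay subexponentially, and a set built from such points cannot meet too many atoms.

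First I would observe that $E_\nu[\mathbbm{1}_{A_n(y)}|\mathcal{B}](y)=\nu^{\mathcal{B}}_y(A_n(y))$ is exactly the conditional mass of the $\mathcal{A}_n$-atom through $y$, so that $I_{\nu,\mathcal{A}_n|\mathcal{B}}(y)=-\log\nu^{\mathcal{B}}_y(A_n(y))$. The conditional SMB theorem (see \cite{D11}) gives, for $\nu$-a.e.\ $y$, that $\tfrac1n I_{\nu,\mathcal{A}_n|\mathcal{B}}(y)\to\bar h(y)$, where $\bar h\geq 0$ is $\mathcal{B}$-measurable with $\int\bar h\,d\nu=h(T|\mathcal{B},\nu)=0$; hence $\bar h=0$ $\nu$-a.e., and therefore
\[
-\tfrac1n\log\nu^{\mathcal{B}}_y(A_n(y))\longrightarrow 0 \qquad \nu\text{-a.e.}
\]
Next I would disintegrate: writing $\lambda=\nu^{\mathcal{B}}_y$ for the conditional measure on the fiber of $y$ and using Fubini in $\nu=\int\nu^{\mathcal{B}}_y\,d\nu(y)$, for $\nu$-a.e.\ $y$ the convergence $-\tfrac1n\log\lambda(A_n(y'))\to 0$ holds for $\lambda$-a.e.\ $y'$ on the fiber.

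Fix such a $y$ and $\epsilon>0$. For $N\geq 1$ I set
\[
B_{y,\epsilon}^{(N)}=\Big\{y' : \lambda(A_n(y'))\geq 2^{-\epsilon n}\ \text{for all}\ n\geq N\Big\}
\]
and intersect it with the fiber of $y$. Since the fiberwise convergence puts $\lambda$-a.e.\ $y'$ into $B_{y,\epsilon}^{(N)}$ for some $N$, these sets increase to a set of full $\lambda$-measure, so I may choose $N$ with $\lambda(B_{y,\epsilon}^{(N)})>0$ and define $B_{y,\epsilon}:=B_{y,\epsilon}^{(N)}$, which is $\mathcal{Y}$-measurable. The dimension bound is then the easy counting step: for $n\geq N$ every atom $A\in\mathcal{A}_n$ meeting $B_{y,\epsilon}$ equals $A_n(y')$ for some $y'\in B_{y,\epsilon}$ and hence has $\lambda(A)\geq 2^{-\epsilon n}$; as the atoms are disjoint and $\lambda$ is a probability measure, at most $2^{\epsilon n}$ of them can occur, so $N_{\mathcal{A}_0,S,n}(B_{y,\epsilon})\leq 2^{\epsilon n}$ for all large $n$, giving $\overline{\dim_{\mathcal{A}_0,S}}B_{y,\epsilon}\leq\epsilon$. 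By construction $\nu^{\mathcal{B}}_y(B_{y,\epsilon})=\lambda(B_{y,\epsilon})>0$. For the final clause I would take $E$ to be the set of $y$ for which the conditional convergence holds $\nu^{\mathcal{B}}_y$-a.e.; this event depends only on the fiber, hence is $\mathcal{B}$-measurable, and it has full (in particular positive) $\nu$-measure, and the construction yields a valid $B_{y,\epsilon}$ with $\nu^{\mathcal{B}}_y(B_{y,\epsilon})>0$ for every $y\in E$.

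I expect the only genuinely delicate point to be the measure-theoretic bookkeeping: invoking the conditional SMB theorem in the correct form relative to a (not necessarily ergodic) factor $\mathcal{B}$, deducing $\bar h=0$ from its nonnegativity together with the vanishing of $h(T|\mathcal{B},\nu)$, and checking that the fiberwise convergence and the ensuing choice of $N=N(y)$ can be arranged $\mathcal{B}$-measurably so that $E$ is honestly $\mathcal{B}$-measurable. The restriction of $B_{y,\epsilon}$ to a single fiber is what makes the per-fiber count valid, and with that in place the counting argument and the dimension estimate are routine.
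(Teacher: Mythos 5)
Your proposal is correct and follows essentially the same route as the paper: the conditional Shannon--McMillan--Breiman theorem identifies $I_{\nu,\mathcal{A}_n|\mathcal{B}}(y)$ with $-\log\nu^{\mathcal{B}}_y(A_n(y))$, and disjointness of the $\mathcal{A}_n$-atoms together with the fact that $\nu^{\mathcal{B}}_{y'}=\nu^{\mathcal{B}}_y$ along the fibre $[y]_{\mathcal{B}}$ bounds the number of atoms of fibre-measure at least $2^{-\epsilon n}$ by $2^{\epsilon n}$, exactly as in the paper's counting step. The only difference is organizational: the paper fixes one global threshold $n_0$ with $\nu(B_{n_0})>0$ and sets $B_{y,\epsilon}=B_{n_0}\cap[y]_{\mathcal{B}}$, while you choose $N=N(y)$ fibrewise (which even yields a full-measure, rather than merely positive-measure, set $E$); this is a harmless variant, not a different argument.
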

	\begin{proof}
		The conditional Shannon-McMillan-Breiman theorem (see \cite[Appendix B]{D11}) implies that for $\nu$ almost all $y\in Y$
		\[
		\lim_{n\to\infty} \frac{1}{n} I_{\nu,\mathcal{A}_n|\mathcal{B}}(y)=h(T|\mathcal{B},\nu).
		\]
		Let $\epsilon>0$ be a small number. Let $k\geq 0$ be an integer and we construct the following set
		\[
		B_{k}=\{y\in Y: \forall n\geq k, I_{\nu,A_n|\mathcal{B}}(y)\leq n (h(T|\mathcal{B},\nu)+\epsilon) \}.
		\]
		Then we have $\nu(\cup_{k\geq 1} B_{k})=1$ and thus there is an integer $n_0>0$ such that $B_{n_0}$ has positive $\nu$ measure. We can choose $n_0$ to be sufficiently large to ensure that $\nu(B_{n_0})$ is very close to one. However, positivity here is enough for later use. 
		
		Suppose that $\nu=\int \nu^{\mathcal{B}}_y d\nu(y)$ is the measure disintegration of $\nu$ against the factor $\mathcal{B},$ see \cite[Theorem 5.14]{EW}(system of conditional measures). Then we see that for $\nu.a.e$ $y\in Y$
		\[
		E_\nu[\mathbbm{1}_{A_n(y)}|\mathcal{B}](y)=\nu^{\mathcal{B}}_y(A_n(y)). 
		\]
		Thus we have
		\[
		B_{n_0}=\{y\in Y: \forall n\geq n_0, \log \nu^{\mathcal{B}}_y(A_n(y))\geq -n (h(T|\mathcal{B},\nu)+\epsilon) \}.
		\]
		Let $A_n$ be an atom in $\mathcal{A}_n$ intersecting $B_{n_0}$ with $n\geq n_0.$ Then we see that for $\nu.a.e. y\in A_n\cap B_{n_0}$ we have
		\[
		\nu^{\mathcal{B}}_y(A_n)=\nu^{\mathcal{B}}_y(A_n(y))\geq 2^{-n (h(T|\mathcal{B},\nu)+\epsilon)}.
		\] 
		Those $\nu.a.e.$ choices of $y$ form a $\mathcal{B}$-measurable set. Thus, by dropping out a $\mathcal{B}$-measurable set with zero $\nu$ measure we can assume that the above holds whenever $y\in A_n\cap B_{n_0}.$

		Since $\mathcal{B}$ is countably generated, we see that the fibre $[y]_{\mathcal{B}}=\bigcap_{F\in\mathcal{B},y\in F} F$ is well-defined and $\mathcal{B}$ measurable. For $\nu.a.e.$ $y\in Y$ the measure $\nu^{\mathcal{B}}_y$ is in fact a well defined probability measure supported on $[y]_{\mathcal{B}}$ and this measure is determined by the atom $[y]$ (see \cite[Theorem 5.14(2)]{EW}). In what follows, we fix arbitrarily such a $y\in Y.$ Suppose that $A_n$ is an atom in $\mathcal{A}_n$ intersecting $B_{n_0}.$ Then by the argument in above, we see that if $A_n\cap [y]_{\mathcal{B}}\cap B_{n_0}\neq \emptyset,$
		\[
		\nu^{\mathcal{B}}_y(A_n)\geq  2^{-n (h(T|\mathcal{B},\nu)+\epsilon)}.
		\]
		This implies that the number of atoms in $\mathcal{A}_n$ intersecting $[y]_{\mathcal{B}}\cap B_{n_0}$ is at most
		\[
		2^{n (h(T|\mathcal{B},\nu)+\epsilon)}.
		\]
		We note that the above arguments hold for a set of $\nu.a.e$ $y\in Y.$ Since we have $h(T|\mathcal{B},\nu)=0$, there is an integer $n_0\geq 1$ such that for $\nu.a.e.$ $y\in Y,$ all $n\geq n_0,$
		\[
		N_{\mathcal{A}_0,T,n}(B_{n_0}\cap [y]_{\mathcal{B}})\leq 2^{n \epsilon}.
		\]
		Thus $\overline{\dim_{\mathcal{A}_0,T}} B_{n_0}\cap [y]_{\mathcal{B}}\leq \epsilon.$ Moreover, we have $\nu(B_{n_0})>0,$ therefore we see that there is a $\mathcal{B}$-measurable set $E$ with  positive $\nu$ measure such that for $y\in E,$
		\[
		\nu^{\mathcal{B}}_y(B_{n_0}\cap [y]_{\mathcal{B}})>0.
		\]
		Note that $B_{n_0}\cap [y]_{\mathcal{B}}$ is $\mathcal{Y}$-measurable but not necessarily $\mathcal{B}$-measurable. This is the set $B_{y,\epsilon}$ as required.
	\end{proof}
	\subsection{Bernouli factors: Ornstein-Weiss's unilateral Sinai's factor theorem}
	For the second step, we need to use the unilateral Sinai's factor theorem which was proved in \cite{OW75}. Let $h=h(T,\nu)$ be the dynamical entropy of $(Y,T,\nu).$ Suppose that $h>0,$ then the unilateral Sinai's factor theorem says that any Bernoulli system $(\Omega,S_B,\nu_B)$ with entropy at most $h$ is a factor of $(Y,T,\nu).$ In particular, we can find a Bernoulli system as a factor of $(Y,T,\nu)$ with entropy $h.$ 
	\subsection{Step Two: Wu's ergodic theoretic result revisited}
	\begin{proof}[Proof of Theorem \ref{WU}]
		First, suppose that $h=h(T,\nu)=0.$ In this case we will see that the trivial disintegration $\rho=\rho$ works. Indeed, we have $\pi_X \rho=\mu, \pi_Y \rho=\nu$ since $\rho$ is a joining. As $h=0$, we see, by Lemma \ref{CSMB} with $\mathcal{B}$ being the trivial $\sigma$-algebra, that for each $\epsilon>0,$ there is a Borel set $B$ with positive $\nu$ measure such that
		\[
		\overline{\dim_{\mathcal{A}_0,T}} B\leq \epsilon.
		\]
		Then we see that $\rho(\pi^{-1}_Y(B))=\nu(B)>0.$ This finishes the proof in the case when $h=0.$
		
		Now suppose that $h>0.$ In this case, let $(\Omega,S_B,\mu_B)$ be a Bernoulli factor of $(Y,T,\nu)$ with entropy $h.$ This Bernoulli factor can be viewed as a $T$-invariant sub $\sigma$-algebra $\mathcal{B}$ in view of Section \ref{FACTOR}. This $\sigma$-algebra $\mathcal{B}$ is countably generated. Then we see that $\mathcal{C}=\pi^{-1}_Y(\mathcal{B})$ is a $S\times T$-invariant sub $\sigma$-algebra. Then we have the system of conditional measures $\rho^{\mathcal{C}}_{(x,y)}$ which are probability measures for $\rho.a.e. (x,y)\in X\times Y.$ Essentially, $\rho^{\mathcal{C}}_{(x,y)}$ does not depend on the choice of $x$. More precisely, we see that $[(x,y)]_{\mathcal{C}}=X\times [y]_{\mathcal{B}}.$ 
		
		By construction, $\pi_Y(\rho^{\mathcal{C}}_{(x,y)})=\nu^{\mathcal{B}}_y$ for $\rho.a.e.$ $(x,y),$ or equivalently, for $\nu.a.e.$ $y\in Y.$ Since $\mathcal{B}$ is obtained via a Bernoulli factor with entropy $h,$ we see that $h(T|\mathcal{B},\nu)=0$ (Abramov-Rokhlin formula \cite[Fact 4.1.6]{D11}). Then for $\nu.a.e.$ $y\in Y$ and all $\epsilon>0,$ we see from Lemma \ref{CSMB} that there is a $\mathcal{Y}$-measurable set $B_{y,\epsilon}$ (which could be empty) with
		\[
		\overline{\dim_{\mathcal{A}_0,T}} B_{y,\epsilon}\leq \epsilon.
		\]
		Moreover, for each $\epsilon>0,$ for a $\mathcal{B}$-measurable set $E$ with positive $\nu$ measure we have
		\[
		\nu^{\mathcal{B}}_y(B_{y,\epsilon})>0
		\]
		whenever $y\in E.$
		
		Let us take a measure $\rho^{\mathcal{C}}_{(x,y)}$ by taking a point $(x,y)$ (where $\rho^{\mathcal{C}}_{(x,y)}$ is defined as a probability measure) such that $y\in E$ and
		\[
		\rho^{\mathcal{C}}_{(x,y)}(\pi^{-1}_Y(B_{y,\epsilon}))=    \nu^{\mathcal{B}}_y(B_{y,\epsilon})>0.
		\] 
		Such choices of $(x,y)$ form a $\mathcal{C}$-measurable set $E'$ with positive $\rho$ measure. In order to finish the proof, we need to show that $\pi_X \rho^{\mathcal{C}}_{(x,y)}=\mu.$ To check this, let $f$ be a continuous function from $X$ to $\mathbb{R}.$ Then we see that by possibly dropping a $\mathcal{C}$-measurable $\rho$-null subset from $E'$,
		\[
		\int f(x') d\pi_X\rho^{\mathcal{C}}_{(x,y)}(x')=\int f(x')d\rho^{\mathcal{C}}_{(x,y)}(x',y')=E_\rho[f|\mathcal{C}](x,y)
		\]
		for $(x,y)\in E'.$ Observe that $\rho$ is $S\times T$-invariant. By construction, $(Y,\mathcal{B},T,\nu)$ is in fact a Bernoulli system. Observe that $\rho$ is also a joining between $(X,S,\mu)$ and $(Y,\mathcal{B},T,\nu)$. As Bernoulli system is weakly mixing, by Example \ref{EXM}, we see that $\rho$ must be equal to $\mu\times \nu$ viewed as a probability measure on the product $\sigma$-algebra $\sigma(\mathcal{X}\times\mathcal{B}).$ Since $\mathcal{C}=\pi^{-1}_Y(\mathcal{B})$ and $f$ is a function on $X,$ we see that for $(x,y)\in E',$
		\[
		E_\rho[f|\mathcal{C}](x,y)=\int fd\mu.
		\]
		As the above holds for all continuous functions on $X,$ we see that $\pi_X \rho^{\mathcal{C}}_{(x,y)}=\mu$ for $(x,y)\in E'.$ In other words, we have shown that $\rho=\int \rho^{\mathcal{C}}_{(x,y)}d\rho(x,y)$ is a measure disintegration satisfying the  statements of this theorem.
	\end{proof}
	
	\section{On sequences $\{p(n)+2^n d \mod 1\}_{n\geq 1}$}
	Now we prove Theorem \ref{main}.
	\begin{proof}[Proof of Theorem \ref{main}]
		First, let $\alpha\in (0,1)$ be an irrational number. We consider the sequence $\{n\alpha+2^n d\}.$ Consider the topological dynamical system $(\mathbb{T}\times\mathbb{T},S=R_\alpha\times T_2)$ where $R_\alpha$ is the $+\alpha\mod 1$ map and $T_2$ is the doubling map: $T_2(x)=2x\mod 1.$ Let $Z=\overline{\{S^n(0,d)\}_{n\geq 0}}.$ As $S$ is continuous, by Bogoliubov-Krylov theorem and ergodic decomposition, we can find an $S$-ergodic probability measure $\rho$ supported on $Z.$ Let $\mathcal{M}$ be the Borel $\sigma$-algebra on $\mathbb{T}$. Then we see that $\rho$ is a joining between $(\mathbb{T},\mathcal{M},R_\alpha,\mu)$ and $(\mathbb{T},\mathcal{M},T_2,\nu)$ where $\mu=\pi_1 \rho, \nu=\pi_2 \rho.$ Note that $\mu$ is the Lebesgue measure.
		
		Now we use Theorem \ref{WU}. For each $\epsilon>0,$ we can find a probability measure $\rho'$ supported on $Z$ such that $\pi_1 \rho'$ is the Lebesgue measure on $\mathbb{T}$ and there is a Borel set $B_\epsilon$ such that $\ubox B_\epsilon\leq \epsilon$ and $\rho'(\pi^{-1}_2(B_\epsilon))>0.$ Here, we choose $\mathcal{A}_0=\{[0,0.5),[0.5,1)\}$ for the doubling map. For this choice, we see that $\mathcal{A}_n$ consists dyadic intervals of length $2^{-n-1}.$ Then it is possible to see that $\overline{\dim_{\mathcal{A}_0,T_2}}$ coincides with the upper box dimension. Consider $A=\pi^{-1}_2(B_\epsilon)\cap Z.$ As $\rho'$ supports on $Z$, we see that
		\[
		\rho'(A)>0.
		\]
		Since $A$ is Borel, we see that $\pi_1(A)$ is Lebesgue measurable. However, as $\pi_1(A)$ might not be Borel measurable, we cannot use the fact that $\pi_1\rho'=\mu$ to deduce that $\pi_1(A)$ has positive Lebesgue measure since all measures here are only defined on Borel sets. If $\pi_1(A)$ has zero Lebesgue measure, then as it is Lebesgue measurable, we see that for each $\delta>0,$ we can cover $\pi_1(A)$ with open intervals with total length at most $\delta.$ Denote the union of those intervals as $A^\delta.$ Then $\pi^{-1}_1(A^\delta)$ is Borel and we have $\rho'(\pi^{-1}_1(A^\delta))=\mu(A^{\delta})\leq\delta.$ However, as $A\subset \pi^{-1}_1(A^\delta)$, we see that $\delta$ cannot be chosen arbitrarily small. Therefore $\pi_1(A)$ has positive Lebesgue measure and hence full Hausdorff dimension. Let $\Sigma$ denote the arithmetic sum map, i.e. $\Sigma(x,y)=x+y$ for $(x,y)\in\mathbb{T}\times\mathbb{T}.$ We have
		\[
		1=\Haus(\pi_1(A))\leq \Haus(\Sigma(A)-\pi_2(A))\leq \Haus(\Sigma(A)\times \pi_2(A))\leq \Haus(\Sigma(A))+\ubox \pi_2(A).
		\]
		Here we have used the fact that
		\[
		\pi_1(A)\subset \Sigma(A)-\pi_2(A)=\{a-b: (a,b)\in \Sigma(A)\times \pi_2(A)\}.
		\]
		We also used the fact that $\Sigma$ is a Lipschitz map. The rightmost inequality is a standard result in geometric measure theory, see \cite[Theorem 8.10]{Ma1}. Thus we see that
		\[
		\Haus \overline{\{n\alpha+2^n d\mod 1\}_{n\geq 0}}=\Haus \Sigma(Z)\geq\Haus \Sigma(A)\geq 1-\ubox \pi_2(A)\geq 1-\epsilon.
		\]
		As the above holds for all $\epsilon>0$ we see that $\Haus \overline{\{n\alpha+2^n d\mod 1\}_{n\geq 0}}=1.$
		
		Now we let $p$ be a polynomial with at least one irrational coefficient. Then the argument above for the special case $p(n)=n\alpha$ can be used here. We need to choose the $X$ component in Theorem \ref{WU} to be the transformation
		\[(x_1,\dots,x_n)\in\mathbb{T}^n\to (x_1+\alpha,x_2+x_1,x_3+x_2,\dots,x_n+x_{n-1})\] on $\mathbb{T}^n$ with a suitably chosen number $\alpha$ and $\Sigma$ to be the map:
		 \[(x_1,\dots,x_n,y)\to \Sigma(x_1,\dots,x_n,y)=x_n+y.\] See also \cite[Theorem 1.4]{EW} and its proof therein.
	\end{proof}
	
	\begin{rem}
		In fact, the above proof shows that for any non-empty closed $R_\alpha\times T_2$ invariant set $Z$, $\Sigma(Z)$ has full Hausdorff dimension. 
	\end{rem}
	\section{Acknowledgement}
	HY was financially supported by the University of St Andrews, the University of Cambridge and the Corpus Christi College, Cambridge. HY has received funding from the European Research Council (ERC) under the European Union's Horizon 2020 research and innovation programme (grant agreement No. 803711). The proof of a weaker version of the main result was much more complicated in a previous version of this manuscript. Thanks to an anonymous referee, almost ten pages of technical proofs now turn into this simple-to-state-and-to-prove-and-stronger Theorem \ref{WU}. The author thanks De-Jun Feng, J. Fraser, T. Keleti for fruitful discussions.

	\providecommand{\bysame}{\leavevmode\hbox to3em{\hrulefill}\thinspace}
	\providecommand{\MR}{\relax\ifhmode\unskip\space\fi MR }
	\providecommand{\MRhref}[2]{%
		\href{http://www.ams.org/mathscinet-getitem?mr=#1}{#2}
	}
	\providecommand{\href}[2]{#2}

\end{document}